\numberwithin{equation}{section}\theoremstyle{plain}
\newtheorem{theorem}{Theorem}[section]
\newtheorem{proposition}[theorem]{Proposition}
\newtheorem{corollary}[theorem]{Corollary}
\newcommand{\norm}[1]{\left\Vert#1\right\Vert}
\newcommand{\abs}[1]{\left\vert#1\right\vert}
\newcommand{\set}[1]{\left\{#1\right\}}
\title[Riesz transforms for Dunkl transform ]{Riesz transforms for Dunkl transform }
\author[B. Amri, M. Sifi]{B\'{e}chir Amri and Mohamed Sifi}
\address{B\'{e}chir Amri, University of Tunis, Preparatory Institute of Engineer Studies of Tunis,
Department of Mathematics, 1089 Montfleury, Tunis, Tunisia} \email{
bechir.amri@ipeit.rnu.tn}
\address{Mohamed Sifi, University of Tunis El Manar,
Faculty of Sciences of Tunis, Department of Mathematics,
2092 Tunis El Manar, Tunis, Tunisia} \email{mohamed.sifi@fst.run.tn}
\begin{document}

\subjclass[2000]{17B22, 32A55, 43A32,  42A45.}

\keywords{Dunkl transforms, Riesz Transforms, Singular  integrals.}

 \thanks{The authors were partially supported by the DGRST project
04/UR/15-02 and the cooperation programs PHC Utique / CMCU 07G 1501 \&
10G 1503.}

\maketitle
\begin{abstract}
In this paper we obtain the $L^p$-boundedness of Riesz transforms for Dunkl transform for all $1<p<\infty$.
\end{abstract}

\section{Introduction}
 On the Euclidean space $\mathbb{R}^N$, $N\geq 1$, the ordinary  Riesz transform  $R_j$, $j=1,...,N$ is defined as the multiplier operator
 \begin{eqnarray}\label{rt}
\widehat{R_j(f)}(\xi)=-i\frac{\xi_j}{\norm{\xi}}\widehat{f}(\xi). \end{eqnarray}
It can  also  be defined by the principal value of the singular integral
$$R_j(f)(x)=d_0\lim_{\varepsilon\rightarrow0}\int_{\norm{x-y}>\varepsilon}\frac{x_j-y_j}{\norm{x-y}}f(y)dy$$
where $d_0=\displaystyle{2^{\frac{N}{2}}\frac{\Gamma(\frac{N+1}{2})}{\sqrt{\pi}}}$. It follows from the general theory of singular integrals
    that Riesz transforms are
bounded on $L^p(\mathbb{R}^N,dx)$ for all $1 < p < Å\infty$. What is done in this paper is to extend this result to the context of Dunkl
 theory
where a similar operator is already defined.
\par Dunkl theory generalizes classical Fourier analysis on $\mathbb{R}^N$. It started twenty years ago
with Dunkl's seminal work \cite{D1} and was further developed by several mathematicians. See
for instance the surveys \cite{J1,R1, R2, xu} and the references cited therein.
The study of the $L^p$-boundedness of Riesz transforms for  Dunkl transform on $\mathbb{R}^N$ goes back to the work
 of S. Thangavelyu and Y. Xu \cite{Tx} where they established  boundedness result only in a very special case of $N=1$.  It has
been noted in \cite{Tx}  that the difficulty  arises  in the application of  the classical $L^p$-
theory of Cald\'{e}ron-Zygmund,
since Riesz transforms are singular integral operators.
 In this paper we describe  how this theory can be adapted in Dunkl setting
 and  gives an $L^p$-result for Riesz transforms for all $1<p<\infty$. More precisely, through  the fundamental  result of
  M. R\"{o}sler \cite{R1} for the Dunkl translation of  radial functions, we reformulate a H\"{o}rmander type condition for  singular
  integral operators.
   The Riesz kernel is given by acting Dunkl operator on  Dunkl translation of radial function.
  \par This paper is organized as follows.
  In Section 2 we present some definitions and fundamental results from  Dunkl's analysis.
     The Section 3  is devoted to proving  $L^p$-boundedness of  Riesz transforms. As applications, we will prove a generalized Riesz
     and
      Sobolev inequalities. Throughout this paper $C$ denotes a constant which can vary from line to line.
      \section{Preliminaries }In this section we collect notations and definitions and recall some basic facts. We refer to
      \cite{ D1, J1, R1, R2,xu}.\par
Let $G\!\subset\!\text{O}(\mathbb{R}^N)$ be a finite reflection
group associated to a reduced root system $R$ and
$k:R\rightarrow[0,+\infty)$ be  a $G$-invariant function (called
multiplicity function). Let $R_+$ be a positive root subsystem. We
shall  assume that $R$ is normalized in the sense that
$\norm{\alpha}^2=\langle\alpha,\alpha\rangle=2$ for all $\alpha\in R
$, where $\langle\;,\;\rangle$ is  the standard Euclidean scalar
product on $\mathbb{R}^N$.
\par The Dunkl operators \,$T_\xi$,  $\xi\in \mathbb{R}^N$ are
the following $k$--de\-for\-ma\-tions of directional derivatives $\partial_\xi$
by difference operators\,:
$$
T_\xi f(x)=\partial_\xi f(x)
+\sum_{\,\alpha\in R_+}\!k(\alpha)\,\langle\alpha,\xi\rangle\,
\frac{f(x)-f(\sigma_\alpha.\,x)}{\langle\alpha,\,x\rangle},\quad x\in\mathbb{R}^N
$$
where   $\sigma_\alpha$
denotes the reflection
with respect to the hyperplane orthogonal to $\alpha$. For the standard basis vectors of $\mathbb{R}^N$, we simply write
$ T_j=T_{e_j} $.
 \par The operators $\partial_\xi$ and $T_\xi$
are intertwined by a Laplace--type operator
\begin{eqnarray*}\label{v}
V_k\hspace{-.25mm}f(x)\,
=\int_{\mathbb{R}^N}\hspace{-1mm}f(y)\,d\mu_x(y),
\end{eqnarray*}
associated to a family of compactly supported probability measures
\,$\{\,\mu_x\,|\,x\!\in\!\mathbb{R}^N\hspace{.25mm}\}$\,.
Specifically, \,$\mu_x$ is supported in the the convex hull $co(G.x)$.
\par For every $\lambda\!\in\!\mathbb{C}^N$\!,
the simultaneous eigenfunction problem,
\begin{equation*}
T_\xi f=\langle\lambda,\xi\rangle\,f,
\qquad \xi\!\in\!\mathbb{R}^N
\end{equation*}
has a unique solution $f(x)\!=\!E_k(\lambda,x)$
such that $E_k(\lambda,0)\!=\!1$,
which is given by
\begin{equation*}\label{EV}
E_k(\lambda,x)\,
=\,V_k(e^{\,\langle\lambda,\,.\,\rangle})(x)\,
=\int_{\mathbb{R}^N}\hspace{-1mm}e^{\,\langle\lambda,y\rangle}\,d\mu_x(y),
\qquad x\!\in\!\mathbb{R}^N.
\end{equation*}
Furthermore \,$\lambda\mapsto E_k(\lambda,x)$ \,extends to
a holomorphic function on \,$\mathbb{C}^N$.
\par Let $m_k$ be the measure on $\mathbb{R}^N$, given by
\begin{eqnarray*}
dm_k(x)=\prod_{\alpha\in R_+}|\langle\alpha,x\rangle|^{2k(\alpha)}dx.
\end{eqnarray*}
For $f\in L^1(m_k)$ (the Lebesgue space with respect to the measure $m_k$)
the Dunkl transform  is defined by
$$\mathcal{F}_k(f)(\xi)=\frac{1}{c_k}
\int_{\mathbb{R}^N}\!f(x)\,E_k(-i\,\xi,x)dm_k(x),\quad c_k\,=\int_{\mathbb{R}^N}\!e^{-\frac{|x|^2}2}\,dm_k(x).
$$
 This new  transform shares many analogous properties of the Fourier transform.
\begin{itemize}
\item[(i)]
The Dunkl transform is a topological automorphism
of the Schwartz space $\mathcal{S}(\mathbb{R}^N)$.
\item[(ii)]
(\textit{Plancherel Theorem\/})
The Dunkl transform extends to
an isometric automorphism of $L^2(m_k)$.
\item[(iii)]
(\textit{Inversion formula\/})
For every
  $f\!\in\!L^1(m_k)$
such that $\mathcal{F}_kf\!\in\!L^1(m_k)$,
we have
$$
f(x)=\mathcal{F}_k^2\!f(-x),\qquad x\!\in\!\mathbb{R}^N.
$$
\item[(iv)]
  For all $\xi\in\mathbb{R}^N$ and $f\in \mathcal{S}(\mathbb{R}^N)$
 \begin{equation}\label{dtj}
  \mathcal{F}_k(T_\xi(f))(x)=<i\xi,x>\mathcal{F}_k(f)(x),\qquad x\!\in\!\mathbb{R}^N.
\end{equation}
\end{itemize}
\par Let $x\in \mathbb{R}^N$, the
Dunkl translation operator $\tau_x$ is defined on  $ L^2(m_k)$  by,
\begin{eqnarray}\label{dutr}
\mathcal{F}_k(\tau_x(f))(y)=
E_k(ix,y)\mathcal{F}_kf(y), \quad y\in\mathbb{R}^N.
\end{eqnarray}
If $f$ is a  continuous radial function in $L^2(m_k)$ with
$f(y)=\widetilde{f}(\norm{y})$, then
\begin{eqnarray}\label{trad}
              \tau_x(f)(y)=\int_{\mathbb{R}^{N}}\widetilde{f}\Big(\;\sqrt{\norm{x}^2+\norm{y}^2+2<y,\eta>}\;\Big)d\mu_x(\eta).
\end{eqnarray}
This formula is first proved by   M. R\"{o}sler \cite{R1}  for  $f\in \mathcal{S}(\mathbb{R}^N)$  and   recently is
extended to continuous functions by F. and H. Dai Wang \cite {DW}.
 \par We collect below some useful facts :
  \begin{itemize}
   \item
  [(i)] For all $x,y\in \mathbb{R}^N$,
  \begin{equation}\label{com}
   \tau_x(f)(y)=\tau_y(f)(x).
\end{equation}
   \item  [(ii)]For all $x,\xi\in \mathbb{R}^N$ and $f\in\mathcal{S}(\mathbb{R}^N)$,
      \begin{equation}\label{dtr}
   T_\xi \tau_x(f)=\tau_x T_\xi(f).
\end{equation}
   \item [(iii)]
   For all $x\in \mathbb{R}^N$ and $f,\;g\in L^2(m_k)$,
   \begin{equation}\label{cov}
\int_{\mathbb{R}^N}\tau_x(f)(-y)g(y)dm_k(y)=\int_{\mathbb{R}^N}f (y) \tau_xg(-y) dm_k(y).
\end{equation}

           \item [(iv)] For all $x\in \mathbb{R}^N$ and  $1\leq p\leq2$, the operator $\tau_x$ can be extended to all radial
           functions $f$ in $L^p(m_k)$  and the following holds
\begin{equation}\label{tp}
  ||\tau_x(f)||_{p,k}\leq ||f||_{p,k}.
  \end{equation}
  \end{itemize}
$\norm{.}_{p,k}$ is the usual norm of $L^p(m_k)$.
  \section{ Riesz transforms  for the Dunkl transform.}
In Dunkl setting the Riesz transforms  (see \cite{Tx}) are the operators $\mathcal{R}_j $,  $j=1...N$ defined on $L^2(m_k)$ by
 $$\mathcal{R}_j(f)(x)=d_k\lim_{\varepsilon\rightarrow0}\int_{|y|>\varepsilon}\tau_x(f)(-y)\frac{y_j}{\norm{y}^{p_k}}dm_k(y),
 \quad x\in\mathbb{R}^N$$
 where
 $$d_k=2^{\frac{p_k-1}{2}}\frac{\Gamma(\frac{p_k}{2})}{\sqrt{\pi}}; \quad p_k=2\gamma_k+N+1\quad \mbox{and}\quad
 \gamma_k=\sum_{\alpha\in R+}k(\alpha).$$
 It has been proved by  S. Thangavelyu and Y. Xu \cite{Tx}, that
$\mathcal{R}_j$  is a multiplier operator given by
\begin{eqnarray}\label{rm}
\mathcal{F}_k(\mathcal{R}_j(f))(\xi)= -i\frac{\xi_j}{\norm{\xi}}\mathcal{F}_k(f)(\xi), \quad f\in\mathcal{S}(\mathbb{R}^N),
\;\xi\in\mathbb{R}^N,
\end{eqnarray}
The authors state that if
  $N=1$ and $2\gamma_k\in\mathbb{N}$  the operator $\mathcal{R}_j$  is bounded   on $L^p(m_k)$, $1<p<\infty$. In  \cite{A}
 this result is improved   by removing $2\gamma_k\in\mathbb{N}$, where Riesz transform is called Hilbert transform. If $\gamma_k=0$ ($k=0$), this operator coincides with the usual Riesz transform $R_j$ given by (\ref{rt}).
Our interest is to prove the boundedness of this operator  for
  $N\geq2$ and $\gamma_k\geq 0$. To do this, we invoke the theory of singular integrals. Our basic is the following,
\begin{theorem}\label{l1} Let $\mathcal{K}$ be a  measurable function
 on $\mathbb{R}^N\times\mathbb{R}^N\setminus \set{(x,g.x);\; x\in\mathbb{R}^N}$ and  $S$ be a bounded
operator
 from $L^2(m_k)$ into itself, associated with a kernel $\mathcal{K}$ in the sense that
 \begin{equation}\label{sing}
 S(f)(x)=\int_{\mathbb{R}^N}\mathcal{K}(x,y)f(y)dm_k(y),
 \end{equation}
 for all compactly supported function $f$ in $L^2(m_k)$ and
for a.e  $x\in \mathbb{R}^N$ satisfying  $g.x\notin supp(f)$, for all $g\in G$. If $\mathcal{K}$ satisfies \begin{equation}\label{ker}
\int_{\min_{g\in
G}\|g.x-y\|>2\|y-y_0\|}|\mathcal{K}(x,y)-\mathcal{K}(x,y_0)|dm_k(x)\leq
C,\quad y,y_0\in \mathbb{R}^N,
\end{equation} then $S$  extends to a bounded operator from
$L^p(m_k)$ into itself for all  $1<p\leq 2$.
\end{theorem}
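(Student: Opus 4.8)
The plan is to follow the classical Calder\'on--Zygmund scheme, adapted to the measure $m_k$ and the reflection group $G$. Since the case $p=2$ is already covered by hypothesis, it suffices to prove that $S$ is of weak type $(1,1)$ and then invoke the Marcinkiewicz interpolation theorem between the weak-$(1,1)$ and strong-$(2,2)$ estimates to obtain strong $(p,p)$ boundedness for every $1<p<2$. The structural facts I would rely on are that $m_k$ is a doubling measure with respect to Euclidean balls, so that $(\mathbb{R}^N,\norm{\cdot},m_k)$ is a space of homogeneous type, and that $m_k$ is $G$-invariant.

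First I would fix a compactly supported $f\in L^1(m_k)$ and $\lambda>0$, and apply the Calder\'on--Zygmund decomposition at height $\lambda$ in this space of homogeneous type. This produces $f=h+b$ with $\norm{h}_\infty\le C\lambda$, $\norm{h}_{1,k}\le\norm{f}_{1,k}$, and $b=\sum_j b_j$, where each $b_j$ is supported in a ball $Q_j=B(y_j,r_j)$, has vanishing integral $\int b_j\,dm_k=0$, satisfies $\norm{b_j}_{1,k}\le C\lambda\,m_k(Q_j)$, and $\sum_j m_k(Q_j)\le\frac{C}{\lambda}\norm{f}_{1,k}$. For the good part, the $L^2$ bound together with $\norm{h}_{2,k}^2\le C\lambda\,\norm{f}_{1,k}$ and Chebyshev's inequality give $m_k(\set{\abs{Sh}>\lambda/2})\le\frac{C}{\lambda}\norm{f}_{1,k}$.

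The heart of the argument is the bad part, and here the essential point is the choice of a $G$-invariant exceptional set. I would set $Q_j^\ast=B(y_j,4r_j)$ and $\Omega=\bigcup_j\bigcup_{g\in G}g(Q_j^\ast)$. By the $G$-invariance of $m_k$ and the doubling property, $m_k(\Omega)\le\abs{G}\sum_j m_k(Q_j^\ast)\le C\sum_j m_k(Q_j)\le\frac{C}{\lambda}\norm{f}_{1,k}$. The key geometric observation is that if $x\notin\Omega$ then $g.x\notin Q_j^\ast$ for every $g\in G$, so that $\min_{g\in G}\norm{g.x-y}\ge 3r_j>2\norm{y-y_j}$ for all $y\in Q_j$; in particular $g.x\notin\operatorname{supp}(b_j)$ for every $g\in G$, whence the integral representation \eqref{sing} applies to $Sb_j(x)$. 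Using the vanishing integral of $b_j$ to subtract $\mathcal{K}(x,y_j)$, together with Fubini and the H\"ormander-type condition \eqref{ker}, one obtains $\int_{\mathbb{R}^N\setminus\Omega}\abs{Sb_j(x)}\,dm_k(x)\le C\norm{b_j}_{1,k}$; summing over $j$ gives $\int_{\mathbb{R}^N\setminus\Omega}\abs{Sb(x)}\,dm_k(x)\le C\norm{f}_{1,k}$. Chebyshev then controls $\set{x\notin\Omega:\abs{Sb(x)}>\lambda/2}$, and combining this with $m_k(\Omega)$ and the good part yields the weak-$(1,1)$ bound.

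The main obstacle I anticipate is precisely the construction and estimation of this orbit exceptional set: one must enlarge the usual Calder\'on--Zygmund exceptional set by the full $G$-orbit so that the truncated region where \eqref{ker} is assumed, namely $\min_{g\in G}\norm{g.x-y}>2\norm{y-y_0}$, contains the complement of $\Omega$, while keeping $m_k(\Omega)$ comparable to $\frac{1}{\lambda}\norm{f}_{1,k}$. This is what dictates the dilation factor and relies on both the doubling property and the $G$-invariance of $m_k$. The remaining ingredients, namely the Calder\'on--Zygmund decomposition in a space of homogeneous type and Marcinkiewicz interpolation, are standard.
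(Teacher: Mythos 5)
Your proposal is correct and follows essentially the same route as the paper's own proof: a Calder\'on--Zygmund decomposition adapted to the doubling measure $m_k$, an exceptional set enlarged by the full $G$-orbit of the dilated balls, the H\"ormander-type condition \eqref{ker} applied with $y_0=y_j$ together with the vanishing mean of the $b_j$, and Marcinkiewicz interpolation against the $L^2$ hypothesis. The only difference is your dilation factor $4$ in place of the paper's $2$, which is harmless and in fact makes the geometric inclusion $\min_{g\in G}\norm{g.x-y}>2\norm{y-y_j}$ for $y\in B(y_j,r_j)$ line up with the region in \eqref{ker} exactly as stated.
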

\begin{proof}
We first note that $(\mathbb{R}^N,m_k)$ is a space of homogenous type, that is, there is a fixed constant  $C>0$ such that
\begin{equation} \label{dm}
 m_k( B(x,2r))\leq Cm_k(B(x,r)),\quad \forall\;x\in\mathbb{R}^N,\;r>0
\end{equation}
where $B(x,r)$ is  the closed ball  of radius $r$ centered at $x$.  Then we can  adapt to our context  the classical  technic
 which consist to show that $S$ is weak type (1,1) and conclude by Marcinkiewicz interpolation theorem.
\par In fact,
 the   Calder\'{o}n-Zygmund decomposition says that for all
$f\in L^1(m_k)\cap L^2(m_k)$ and $\lambda>0$, there
exist a decomposition of $f$, $f=h+b$ with $b=\sum_j
b_j$ and a sequence of balls $(B(y_j,r_j))_j$ =$(B_j)_j$ such that for some
constant $C$, depending only on the multiplicity function $k$
\begin{itemize}
  \item [(i)]$\norm{h }_\infty\leq C \lambda$;
  \item [(ii)]$supp(b_j)\subset B_j$;
  \item [(iii)]$\displaystyle{\int_{B_j}b_j(x)dm_k(x)=0}$;
  \item [(iv)]$\norm{b_j}_{1,k}\leq
C\;\lambda\, m_k(B_j)$;
\item [(v)]$\displaystyle{\sum_j m_k(B_j)\leq C\;\frac{\norm{f}_{1,k}}{\lambda}}$ .
\end{itemize}
The proof  consists in showing the following inequality hold for $w=h$ and $w=b$ :
\begin{eqnarray}\label{Tg}
\rho_\lambda (S(w))=m_k\Big(\{x\in\mathbb{R}^N;\; |S(w)(x)|>\frac{\lambda}{2}\}\Big)\leq
C\;\frac{\norm{f}_{1,k}}{\lambda}.
\end{eqnarray}
\par By using the $L^2$-boundedness of $S$ we get
\begin{equation}\label{h}
\rho_\lambda (S(h))\leq
\frac{4}{\lambda^2}\int_{\mathbb{R}^N}| S(h)(x)|^2dm_k(x) \leq
\frac{C}{\lambda^2}\int_{\mathbb{R}^N}| h(x)|^2dm_k(x).
\end{equation}
  From $(i)$ and $(v)$,  \begin{equation}\label{h1}\int_{\cup B_j}| h(x)|^{2}dm_k(x)\leq C \lambda^2\mu_k(\cup B_j)\leq C
   \lambda
\norm{f}_{1,k}.\end{equation}
Since  on $(\bigcup_j  B_j)^c$, $f(x)=g(x)$, then
\begin{equation}\label{h2}\int_{(\cup_j B_j)^c}| h(x)|^{2}dm_k(x)\leq C \lambda\norm{f}_{1,k}.\end{equation}
From (\ref{h}),  (\ref{h1}) and (\ref{h2}), the inequality (\ref{Tg}) is satisfied for $h$.
\par Next we turn to the inequality (\ref{Tg}) for the function $b$.
Consider $$B_j^* =B(y_j,2r_j\; );\quad \mbox{ and}  \quad Q_j^*= \bigcup_{g\in G} \;g.B_j^*.$$ Then
\begin{eqnarray*}
\rho_\lambda (S(b))\leq m_k\Big(\bigcup_j Q_j^* \Big)+m_k\set{x\in \Big(\bigcup_jQ_j^*\Big)^c ;|S(b)(x)|>\frac{\lambda}{2}}.
  \end{eqnarray*}
Now by (\ref{dm}) and \textit{(v)}
$$m_k \Big( \bigcup_j Q_j^*\Big)\leq |G| \sum_jm_k(B_j^*)\leq C \sum_jm_k (B_j)\leq
C\;\frac{\norm{f}_{1,k}}{\lambda}.$$
    Furthermore if  $x\notin Q_j^*$, we have $$
\min_{g\in G}\norm{g.x- y_j}>2\norm{y-y_j},\quad  y\in B_j.  $$
Thus, from (\ref{sing}),\textit{(iii)} ,\textit{(ii)}, (\ref{ker}), \textit{(iv)} and \textit{(v)}
\\\\
$\displaystyle{\int_{(\cup Q_j^*)^c}|S(b)(x)|dm_k(x)}$
\begin{eqnarray*}
&\leq&\sum_j
\int_{(Q_j^*)^c}|S(b_j)(x)|dm_k(x)\\
& =&\sum_j\int_{(Q_j^*)^c}\abs{\int_{\mathbb{R}^N} \mathcal{K}(x,y)b_j(y)dm_k(y)}dm_k(x)\\&=&
 \sum_j \int_{(Q_j^*)^c}\abs{\int_{\mathbb{R}^N}b_j(y)\Big(\mathcal{K}(x,y)-\mathcal{K}(x,y_j)\Big)dm_k(y)}dm_k(x)\\
 &\leq&\sum_j\int_{\mathbb{R}^N} |b_j(y)|\int_{(Q_j^*)^c}\abs{\mathcal{K}(x,y)-\mathcal{K}(x,y_j)}dm_k(x)dm_k(y)\\
&\leq& \sum_j\int_{\mathbb{R}^N}|b_j(y)| \int_{\min_{g\in G}\norm{g.x- y_j }>2\norm{y-y_j}}|\mathcal{K}(x,y)-\mathcal{K}(x,y_j)|
dm_k(x)dm_k(y)
\\
&\leq& C \sum_j \norm{b_j}_{1,k}\\
&\leq& C \;\norm{f}_{1,k}.
\end{eqnarray*}
Therefore,
\begin{eqnarray*}
m_k\set{x\in \Big(\bigcup_j Q_j^*\Big)^c ;|S(b)(x)|>\frac{\lambda}{2}}
\leq\frac{2}{\lambda}\int_{(\cup Q_j^*)^c}|S(b)(x)|dm_k(x)\leq C\;\frac{\norm{f}_{1,k}}{\lambda}.
\end{eqnarray*}
This achieves the proof of
(\ref{Tg}) for $b$.
\end{proof}
\par Now, we will give an integral representation for the Riesz transform $\mathcal{R}_j$. For this end, we put for
$x,y\in\mathbb{R}^N$ and $\eta\in co(G.x)$
$$A(x,y,\eta)=\sqrt{\norm{x}^2+\norm{y}^2-2<y,\eta>}=\sqrt{\norm{y-\eta}^2+\norm{x}^2-\norm{\eta}^2}.$$
It is easy to check that
\begin{equation}\label{A}
  \min_{g\in G}\norm{g.x-y}\leq A(x,y,\eta)\leq \max_{g\in G}\norm{g.x-y}.
\end{equation}
The following is clear
 \begin{eqnarray}\label{a1}\quad
\Big|\frac{\partial A^\ell}{\partial y_r}(x,y,\eta)\Big|\leq
 C A^{\ell-1}(x,y,\eta),\; \Big|\frac{\partial^2 A^\ell}{\partial y_r\partial y_s}(x,y,\eta)\Big|\leq
 C A^{\ell-2}(x,y,\eta)
 \end{eqnarray}
 \begin{equation} \label{a2}
\Big|\frac{\partial A^\ell}{\partial y_r}(x,\sigma_\alpha.y,\eta)\Big| \leq
 C A^{\ell-1}(x,\sigma_\alpha.y,\eta),\;  \Big|\frac{\partial^2 A^\ell}{\partial y_r\partial y_s}(x,\sigma_\alpha.y,\eta)\Big|\leq
 C A^{\ell-2}(x,\sigma_\alpha.y,\eta),
 \end{equation}for all $r,s=1,...N$ and $\ell\in\mathbb{R}$.

 Let us set \begin{eqnarray*}\mathcal{K}_j^{(1)}(x,y)&=&\int_{\mathbb{R}^N}
\frac{\eta_j-y_j}{ A^{p_k} (x,y,\eta)}d\mu_x(\eta)\\ \mathcal{K}_j^{(\alpha)}(x,y)&=&\frac{1}{<y,\alpha>}
\int_{\mathbb{R}^N}\Big[\frac{1}{A^{p_k-2}(x,y,\eta)}-\frac{1}{A^{p_k-2}(x,\sigma_\alpha.y,\eta)}\Big]d\mu_x(\eta),\quad
\alpha \in R_+,\\
\mathcal{K}_j(x,y)&=& d_k\Big\{\mathcal{K}_j^{(1)}(x,y)+\sum_{\alpha\in R_+}\frac{k(\alpha)\alpha_j}{p_k-2}
\mathcal{K}_j^{(\alpha)}(x,y)\Big\}.
\end{eqnarray*}
\begin{proposition}\label{l2} If $f\in L^2(m_k)$ with compact support, then for all $x\in\mathbb{R}^N$ such that
$g.x\notin supp(f)$, $g\in G$, we have
$$\mathcal{R}_j(f)(x)=\int_{\mathbb{R}^N}\mathcal{K}_j(x,y)f(y)dm_k(y).$$
\end{proposition}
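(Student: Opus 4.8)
The plan is to recognize both the Riesz kernel $y\mapsto y_j/\norm{y}^{p_k}$ and the proposed kernel $\mathcal{K}_j$ as the Dunkl operator $T_j$ (in the variable $y$) acting on one and the same radial profile, and then to transport this derivative through the symmetry relation (\ref{cov}). Put $\phi(y)=\norm{y}^{-(p_k-2)}$ and $\Phi(x,y)=\int_{\mathbb{R}^N}A^{-(p_k-2)}(x,y,\eta)\,d\mu_x(\eta)$. Since reflections preserve the norm, $\phi$ is radial, so the difference part of $T_j$ vanishes on it and $T_j\phi=\partial_j\phi=-(p_k-2)\,y_j/\norm{y}^{p_k}$; in particular the Riesz kernel equals $-\tfrac{1}{p_k-2}T_j\phi$. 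On the other side, differentiating $A^{-(p_k-2)}(x,\cdot,\eta)$ with the help of $\partial A/\partial y_j=(y_j-\eta_j)/A$ and forming the difference quotient along $\sigma_\alpha$, the bounds (\ref{a1})--(\ref{a2}) let me differentiate under the sign $\int\cdots d\mu_x(\eta)$, and a direct computation reproduces precisely the two pieces $\mathcal{K}_j^{(1)}$ and $\mathcal{K}_j^{(\alpha)}$. This yields the identity $\mathcal{K}_j(x,y)=\tfrac{d_k}{p_k-2}\,T_j^{(y)}\Phi(x,y)$. Finally, R\"osler's formula (\ref{trad}) for the radial function $\phi$, evaluated at $-y$, identifies $\Phi(x,y)=\tau_x(\phi)(-y)$.

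With these identifications I would first take $f\in\mathcal{S}(\mathbb{R}^N)$, so that $\tau_x f$ and $T_j f$ are smooth and rapidly decreasing. Starting from the definition of $\mathcal{R}_j$ and inserting $y_j/\norm{y}^{p_k}=-\tfrac{1}{p_k-2}T_j\phi$, I obtain
$$\int_{\norm{y}>\eps}\tau_x(f)(-y)\,\frac{y_j}{\norm{y}^{p_k}}\,dm_k(y)=-\frac{1}{p_k-2}\int_{\norm{y}>\eps}\tau_x(f)(-y)\,T_j\phi(y)\,dm_k(y).$$
Integrating by parts for $T_j$ on the $G$-invariant region $\set{\norm{y}>\eps}$ moves $T_j$ off $\phi$; using the commutation (\ref{dtr}) in the form $T_j^{(y)}\big[\tau_x(f)(-\,\cdot\,)\big](y)=-\tau_x(T_j f)(-y)$, the interior term becomes $\int_{\norm{y}>\eps}\tau_x(T_j f)(-y)\,\phi(y)\,dm_k(y)$, up to a boundary integral $B_\eps$ over $\set{\norm{y}=\eps}$ coming from the local part of $T_j$. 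Letting $\eps\to0$ (the integrand is integrable at the origin, since $\tau_x(T_j f)(0)=T_j f(x)=0$) and applying the symmetry relation (\ref{cov}), I reach $\int_{\mathbb{R}^N}T_j f(y)\,\tau_x(\phi)(-y)\,dm_k(y)=\int_{\mathbb{R}^N}T_j f(y)\,\Phi(x,y)\,dm_k(y)$. A last integration by parts — now harmless, because $\Phi(x,\cdot)$ is smooth on $supp(f)$ — transfers $T_j$ onto $\Phi$, and, restoring the prefactor $-\tfrac{d_k}{p_k-2}$, the two sign changes combine to give $\mathcal{R}_j(f)(x)=\tfrac{d_k}{p_k-2}\int f(y)\,T_j^{(y)}\Phi(x,y)\,dm_k(y)=\int f(y)\,\mathcal{K}_j(x,y)\,dm_k(y)$.

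The heart of the matter is the singular integration by parts: the convergence of the pairings against the non-$L^2$ profile $\phi$ and the vanishing of the sphere term $B_\eps$. Here the hypothesis $g.x\notin supp(f)$ for every $g\in G$ is what makes everything work. By (\ref{A}) it forces $A(x,y,\eta)$ to stay bounded away from $0$ for $y\in supp(f)$, so $\Phi(x,\cdot)$ is smooth there and the final integration by parts produces no boundary contribution; the same condition gives $\tau_x(f)(0)=f(x)=0$ and $\tau_x(T_j f)(0)=T_j f(x)=0$, which both renders $\phi\cdot\tau_x(T_j f)(-\,\cdot\,)$ integrable near the origin and, by a direct weight count on the sphere, yields $\abs{B_\eps}\lesssim\sup_{\norm{y}\le\eps}\abs{\tau_x(f)(-y)}\to0$. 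I would make the $\phi$-pairings rigorous by truncating $\phi$ to an $L^2(m_k)$ function and passing to the limit, the support condition guaranteeing absolute convergence throughout. Finally I would remove the restriction $f\in\mathcal{S}(\mathbb{R}^N)$ by density: for $x$ with $g.x\notin supp(f)$ the map $y\mapsto\mathcal{K}_j(x,y)$ is bounded and continuous on $supp(f)$, so $f\mapsto\int\mathcal{K}_j(x,\cdot)f\,dm_k$ is $L^2$-continuous, while $\mathcal{R}_j$ is $L^2$-bounded by the multiplier identity (\ref{rm}); both sides therefore extend from $\mathcal{S}(\mathbb{R}^N)$ to all compactly supported $f\in L^2(m_k)$.
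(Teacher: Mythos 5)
Your kernel identifications and your smooth-case computation are sound, and they run on exactly the mechanism the paper itself uses: the truncated Riesz kernel is $T_j$ of a radial profile, $\mathcal{K}_j=\frac{d_k}{p_k-2}\,T_j^{(y)}\Phi$ with $\Phi(x,y)=\tau_x\phi(-y)$ coming from R\"osler's formula (\ref{trad}), and $T_j$ is transported across the pairing by (\ref{cov}) and (\ref{dtr}). The difference is in the regularization. The paper never integrates by parts and never pairs against the non-$L^2$ profile $\phi$: it regularizes the profile itself, replacing $\phi$ by the smooth, compactly supported radial functions $\phi_{n,\eps}$ (primitives of cutoffs of $|u|^{1-p_k}$), so that (\ref{cov}), (\ref{dtr}), (\ref{trad}) apply verbatim, no boundary terms appear, and the whole argument works directly for an arbitrary compactly supported $f\in L^2(m_k)$; in particular it shows that the truncated integral equals $\int\mathcal{K}_j(x,\cdot)f\,dm_k$ for \emph{every} sufficiently small $\eps$, so the principal value exists trivially. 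Your variant instead keeps $\phi$ singular, integrates by parts on $\set{\norm{y}>\eps}$ with a boundary term, and therefore needs $f$ smooth with compact support (note that for a genuinely Schwartz $f$ the hypothesis on $x$ is vacuous, and you implicitly use compactness of $supp(T_jf)$ when pairing with $\Phi$); general $f$ is deferred to a density argument.

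That density step is a genuine gap. For fixed $x$, the functional $f\mapsto\int\mathcal{K}_j(x,\cdot)f\,dm_k$ is indeed $L^2$-continuous on functions supported in a fixed compact set avoiding $G.x$, but the left-hand side is not matched to it: $L^2$-boundedness of $\mathcal{R}_j$ only gives $\mathcal{R}_jf_n\to\mathcal{R}_jf$ in $L^2(m_k)$, hence a.e.\ along a subsequence, and evaluation at the particular point $x$ is not an $L^2$-continuous functional. Worse, for non-smooth $f$ the quantity $\mathcal{R}_j(f)(x)$ in the Proposition means the principal value $d_k\lim_{\eps\to0}\int_{\norm{y}>\eps}\tau_xf(-y)\,y_j\norm{y}^{-p_k}dm_k(y)$, and nothing in your argument shows this limit exists at the given $x$. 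You would have to interchange $\lim_{\eps\to 0}$ with $\lim_{n\to\infty}$, which needs either the truncated integrals to stabilize for $\eps<\min_{g\in G}\mathrm{dist}(g.x,supp(f))$ --- equivalently, that $\tau_xf(-y)$ vanishes a.e.\ near $y=0$, a support theorem for Dunkl translations that is not among the facts (\ref{com})--(\ref{tp}) available to you --- or an error bound uniform in $n$, which your boundary-term estimate $\abs{B_\eps}\lesssim\sup_{\norm{y}\le\eps}\abs{\tau_xf_n(-y)}$ does not provide, since the modulus of continuity of $\tau_xf_n$ near the origin degenerates as $f_n$ mollifies a rough $f$. As written, your argument delivers the representation only for a.e.\ admissible $x$, with $\mathcal{R}_jf$ understood as the $L^2$-multiplier extension --- enough for the application in Theorem \ref{l1}, but not the Proposition as stated; the paper's cutoff device is precisely what removes this difficulty.
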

\begin{proof}
Let $f\in L^2(m_k)$ be a compact supported function  and $x\in \mathbb{R}^N$, such that $g.x\notin supp(f)$ for all $g\in G$.
For $\displaystyle{0<\varepsilon<\min_{g\in G}\min_{y\in supp(f)}|g.x-y|}$ and $n\in\mathbb{N}$, we consider
$ \widetilde{\varphi}_{n,\varepsilon}$  a $C^\infty-$function on $\mathbb{R}$, such that:
\begin{itemize}
\item $\widetilde{\varphi}_{n,\varepsilon}$ is odd .
 \item $\widetilde{\varphi}_{n,\varepsilon}$ is supported in $\set{t\in \mathbb{R}; \;\varepsilon \leq|t|\leq n+1}$.
 \item $\widetilde{\varphi}_{n,\varepsilon}=1$ in $\set{t\in \mathbb{R}; \;\varepsilon+\frac{1}{n}\leq t \leq n}$.
 \item $\abs{\widetilde{\varphi}_{n,\varepsilon}}\leq1$.
 \end{itemize}
Let
$$\widetilde{\phi}_{n,\varepsilon}(t)=\int_{-\infty}^t \frac{\widetilde{\varphi}_{n,\varepsilon}(u)}{|u|^{p_k-1}} \;du\;\quad \mbox{and}
\quad \phi_{n,\varepsilon}(y)=\widetilde{\phi}_{n,\varepsilon}(\norm{ y }),\quad t\in\mathbb{R},\; y\in\mathbb{R}^N.$$ Clearly,
 $\phi_{n,\varepsilon}$  is a $C^\infty$ radial function supported in the ball $B(0,n+1)$ and
   $$\lim_{n\rightarrow +\infty}\widetilde{\varphi}_{n,\varepsilon}(\norm{y})=1, \quad\forall \;y\in\mathbb{R}^N,\;\norm{y}>\varepsilon. $$
 The dominated convergence theorem, (\ref{dtr}) and (\ref{cov}) yield
\begin{eqnarray*}
\int_{\norm{y}>\varepsilon}\tau_x(f)(-y)\frac{y_j}{\norm{y}^{p_k}}dm_k(y)&=&\lim_{n\rightarrow\infty}\int_{\mathbb{R}^N}\tau_x(f)(-y)
\frac{y_j}{\norm{y}^{p_k}}\widetilde{\varphi}_{n,\varepsilon}(\norm{y})dm_k(y)\\
&=&\lim_{n\rightarrow\infty}\int_{\mathbb{R}^N}\tau_x(f)(-y)
T_j(\phi_{n,\varepsilon})(y)dm_k(y)\\
&=&\lim_{n\rightarrow\infty}\int_{\mathbb{R}^N}f(y)
T_j\tau_x(\phi_{n,\varepsilon})(-y)dm_k(y).
\end{eqnarray*}
Now we have
\begin{eqnarray*} T_j\tau_x(\phi_{n,\varepsilon})(-y)&=&
      \int_{\mathbb{R}^N}
\frac{(\eta_j-y_j)\widetilde{\varphi}_{n,\varepsilon}(A(x,y,\eta))}{  A^{p_k}(x,y,\eta) }d\mu_x(\eta)\\ &+&
\sum_{\alpha\in R_+}k(\alpha)\alpha_j\int_{\mathbb{R}^N}\frac{\widetilde{\phi}_{n,\varepsilon}(A(x,\sigma_\alpha.y,\eta))-
\widetilde{\phi}_{n,\varepsilon}(A(x,y,\eta))}
{<y,\alpha>}d\mu_x(\eta),
\end{eqnarray*}
where  from (\ref{A})
 $$\varepsilon<  A(x,y,\eta)\; ; \quad \varepsilon<  A(x,\sigma_\alpha.y,\eta),\quad y\in supp(f),\;\eta\in co(G.x).$$
 Then with the aid of
dominated convergence theorem
$$
\lim_{n\rightarrow\infty} T_j\tau_x(\phi_{n,\varepsilon})(-y) =  \frac{1}{d_k}\mathcal{K}_j(x,y),
 $$
and
$$d_k\int_{\norm{y}\geq\varepsilon}\tau_x(f)(-y)\frac{y_j}{\norm{y}^{p_k}}dm_k(y)=\int_{\mathbb{R}^N}\mathcal{K}_j(x, y)f(y)dm_k(y).$$
Letting  $\varepsilon\rightarrow0$, it follows that
$$\mathcal{R}_j(f)(x)=\int_{\mathbb{R}^N}\mathcal{K}_j(x,y)f(y)dm_k(y),$$
which proves the result.
\end{proof}
Now, we are able to state our main result.
\begin{theorem}\label{th2}
The Riesz transform $\mathcal{R}_j$, $j=1...N$, is a bounded operator from $L^p(m_k)$ into itself, for all $1<p<\infty$.
\end{theorem}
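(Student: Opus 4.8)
The plan is to obtain the range $1<p\le2$ from Theorem \ref{l1} and the remaining range $2\le p<\infty$ by duality. Two of the three hypotheses of Theorem \ref{l1} are already in hand: by the multiplier identity (\ref{rm}) and the Plancherel theorem (property (ii)), $\mathcal{R}_j$ is bounded on $L^2(m_k)$ since its symbol $-i\,\xi_j/\norm{\xi}$ has modulus $\le1$; and by Proposition \ref{l2}, $\mathcal{R}_j$ is represented on compactly supported $L^2$-functions by the kernel $\mathcal{K}_j$ off the orbit of $x$. Everything therefore reduces to verifying the H\"ormander-type condition (\ref{ker}) for $\mathcal{K}_j$, after which Theorem \ref{l1} gives boundedness on $L^p(m_k)$ for $1<p\le2$.

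To check (\ref{ker}) I would fix $y,y_0$ and set $R=2\norm{y-y_0}$, $\rho(x)=\min_{g\in G}\norm{g.x-y}$, and estimate the difference $\mathcal{K}_j(x,y)-\mathcal{K}_j(x,y_0)$ by the mean value theorem along the segment $y_\theta=y_0+\theta(y-y_0)$: on the region $\rho(x)>R$ one has $\min_{g\in G}\norm{g.x-y_\theta}\ge\tfrac12\rho(x)$ for all $\theta\in[0,1]$, so by (\ref{A}) every $A(x,y_\theta,\eta)$ (and every $A(x,\sigma_\alpha.y_\theta,\eta)$) stays $\ge\tfrac12\rho(x)$. Differentiating $\mathcal{K}_j^{(1)}$ once and using $\abs{\eta_j-y_j}\le A$ together with the first-order bounds in (\ref{a1}) gives $\abs{\nabla_y\mathcal{K}_j^{(1)}(x,y_\theta)}\le C\int A^{-p_k}(x,y_\theta,\eta)\,d\mu_x(\eta)$; for the reflection pieces $\mathcal{K}_j^{(\alpha)}$ I would write the finite difference in $\langle y,\alpha\rangle$ as an integral of $\partial_\alpha A^{-(p_k-2)}$ along the segment from $\sigma_\alpha.y$ to $y$ and then differentiate once more, now invoking the second-order bounds in (\ref{a1})--(\ref{a2}); this is exactly the role of those estimates. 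Thus $\abs{\nabla_y\mathcal{K}_j(x,y_\theta)}\le C\int A^{-p_k}(x,y_\theta,\eta)\,d\mu_x(\eta)$, and by (\ref{trad}) the right-hand side is the Dunkl translate $\tau_x h(-y_\theta)$ of the radial kernel $h=\norm{\cdot}^{-p_k}$.

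The heart of the matter is then the bound
$$\int_{\rho(x)>R}\Big(\int_{\mathbb{R}^N}A^{-p_k}(x,y,\eta)\,d\mu_x(\eta)\Big)dm_k(x)\le\frac{C}{R},$$
uniformly in $y$. Here the crude estimate that replaces $A$ by its lower bound $\rho(x)$ and uses that $\mu_x$ is a probability measure is far too lossy, because $m_k$ is not translation invariant; instead one must keep the measure $\mu_x$ and show that $\int A^{-p_k}\,d\mu_x$ obeys the sharp Calder\'on--Zygmund bound $C\big(\rho(x)\,m_k(B(x,\rho(x)))\big)^{-1}$ adapted to the space of homogeneous type $(\mathbb{R}^N,m_k)$. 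Granting this, an annular decomposition together with the doubling property (\ref{dm}) yields the displayed $C/R$, whence $\int_{\rho(x)>R}\abs{\mathcal{K}_j(x,y)-\mathcal{K}_j(x,y_0)}dm_k(x)\le C\norm{y-y_0}R^{-1}=C/2$, which is (\ref{ker}). I expect this weighted estimate on the translated kernel---controlling $\int A^{-p_k}d\mu_x$ through the R\"osler representation of $\mu_x$ and the two-sided inequality (\ref{A}), and handling the reflection terms $\mathcal{K}_j^{(\alpha)}$---to be the main obstacle of the proof.

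Finally, for $2\le p<\infty$ I would use duality. By (\ref{rm}) and Plancherel the $L^2$-adjoint of $\mathcal{R}_j$ is the multiplier operator with symbol $\overline{-i\,\xi_j/\norm{\xi}}=i\,\xi_j/\norm{\xi}$, that is $\mathcal{R}_j^{*}=-\mathcal{R}_j$; hence $\mathcal{R}_j$ is skew-adjoint on $L^2(m_k)$. Applying the already established case to the conjugate exponent $p'\in(1,2]$ shows $\mathcal{R}_j$ is bounded on $L^{p'}(m_k)$, and duality transfers this to boundedness of $\mathcal{R}_j^{*}=-\mathcal{R}_j$, hence of $\mathcal{R}_j$, on $L^{p}(m_k)$. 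Combining the two ranges proves the theorem.
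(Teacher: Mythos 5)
Your overall architecture coincides with the paper's: $L^2$-boundedness from (\ref{rm}) and Plancherel, the kernel representation of Proposition \ref{l2}, reduction to the H\"ormander condition (\ref{ker}) so that Theorem \ref{l1} gives $1<p\le 2$, and skew-adjointness $\mathcal{R}_j^*=-\mathcal{R}_j$ plus duality for $p\ge 2$; the mean-value-theorem bound $\abs{\nabla_y\mathcal{K}_j(x,y_\theta)}\le C\int A^{-p_k}(x,y_\theta,\eta)\,d\mu_x(\eta)$ is also the paper's. The problem is at what you yourself call the heart of the matter. You correctly observe that replacing $A$ by its lower bound $\rho(x)$ is too lossy (uniformity in $y$ fails because $m_k$ is not translation invariant), but your remedy --- the pointwise estimate $\int A^{-p_k}\,d\mu_x\le C\big(\rho(x)\,m_k(B(x,\rho(x)))\big)^{-1}$ followed by an annular decomposition --- is simply asserted (``Granting this\dots'', ``I expect this \dots to be the main obstacle''). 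This is not a routine lemma: it requires quantitative control of the measures $\mu_x$, which are explicit only in very special cases, and nothing in the paper (nor in your proposal) provides it; sharp pointwise bounds of this kind for Dunkl translates of radial kernels are genuinely difficult. Note also that your identification of $\int A^{-p_k}d\mu_x$ as $\tau_x h(-y_\theta)$ with $h=\norm{\cdot}^{-p_k}$ cannot be combined with (\ref{tp}), since $h\notin L^1(m_k)$ (the integral $\int r^{-p_k}r^{p_k-2}\,dr$ diverges at the origin). So the proposal is incomplete precisely at its central step.

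The paper closes this step by a soft argument that makes any pointwise estimate unnecessary, and it is worth seeing why it works. On the region $\min_{g\in G}\norm{g.x-y}>2\norm{y-y_0}$, inequality (\ref{A}) gives $\norm{y-y_0}<A(x,y_\theta,\eta)$, hence $A^{-p_k}\le 2^{p_k/2}\big(\norm{y-y_0}^2+A^2\big)^{-p_k/2}$; by R\"osler's formula (\ref{trad}) the resulting inner integral is exactly a Dunkl translate $\tau_x(\psi)$ evaluated along the segment, where $\psi(z)=(\norm{y-y_0}^2+\norm{z}^2)^{-p_k/2}$ is radial and, unlike $\norm{z}^{-p_k}$, lies in $L^1(m_k)$ with $\norm{\psi}_{1,k}=C/\norm{y-y_0}$ by homogeneity of $m_k$ (degree $p_k-1$). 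Then, instead of estimating the translate pointwise, one integrates in $x$ over all of $\mathbb{R}^N$: the symmetry (\ref{com}), Fubini, and the $L^1$-contractivity (\ref{tp}) of $\tau$ on radial functions give $\int_{\mathbb{R}^N}\tau_x(\psi)(y_\theta)\,dm_k(x)\le\norm{\psi}_{1,k}$, and the prefactor $\norm{y-y_0}$ from the mean value theorem cancels against $\norm{\psi}_{1,k}$, yielding the uniform constant in (\ref{ker}). The same device disposes of the reflection kernels $\mathcal{K}_j^{(\alpha)}$ once their differences are bounded by $C\norm{y-y_0}\,(A^{-p_k}+A_\alpha^{-p_k})$ --- which is the content of the paper's computation with $U$ and $V_\alpha$ that you only sketch. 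Unless you can actually prove your pointwise Calder\'on--Zygmund bound on $\int A^{-p_k}d\mu_x$, you should replace that step by this integrated argument.
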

\begin{proof}
 Clearly, from (\ref{rm}) and Plancherel's theorem  $\mathcal{R}_j$ is bounded from $L^2(m_k)$ into itself,
with adjoint operator $\mathcal{R}_j^*=-\mathcal{R}_j$. Thus, via duality it's enough to  consider  the range $1<p\leq2$ and apply
  Theorem \ref{l1}. In view of Proposition \ref{l2} it only remains to show that  $\mathcal{K}_j$ satisfies condition
  (\ref{ker}).\par
Let  $y,y_0\in \mathbb{R}^N$, $y\neq y_0$ and $x\in \mathbb{R}^N$, such that
\begin{equation}\label{yy0}
    \min_{g\in G}\norm{g.x-y}> 2\|y-y_0\|.
\end{equation}
 By mean value theorem,\\\\
 $\displaystyle{\abs{\mathcal{K}_j^{(1)}(x,y)-\mathcal{K}_j^{(1)}(x,y_0)}}$
  \begin{eqnarray*}
 &=&\abs{\sum_{i=0}^{N}(y_i-(y_0)_i)\int_{0}^{1}\frac{\partial\mathcal{K}_j^{(1)}}{\partial y_i }(x,y_t)\;dt}\\&=& \abs{\sum_{i=0}^{N}(y_i-(y_0)_i)\int_{0}^{1}\int_{\mathbb{R}^N}\frac{\delta_{i,j}}{A^{p_k}(x,y_t,\eta)}
+\frac{p_k((y_t)_i-\eta_i)(\eta_j-(y_t)_j)}{ A^{p_k+2}(x,y_t,\eta) }\;d\mu_x(\eta)}\\
&\leq& C \|y-y_0\|\int_0^1\int_{\mathbb{R}^N}\frac{1}{A^{p_k}(x,y_t,\eta)}\;d\mu_x(\eta)dt.
 \end{eqnarray*}
   where $y_t=y_0+t(y-y_0)$ and  $\delta_{i,j}$ is the Kronecker symbol.\\
In view of   (\ref{A}) and (\ref{yy0}), we obtain
$$ \|y-y_0\|< A(x,y_t,\eta),\quad \eta\in co(G.x).$$
 Therefore,\\

 $\displaystyle{\abs{\mathcal{K}_j^{(1)}(x,y)-\mathcal{K}_j^{(1)}(x,y_0)}}$
\begin{eqnarray}\label{k1}
 \nonumber&\leq&C \|y-y_0\|\int_0^1\int_{\mathbb{R}^N}\frac{1}{\Big(\|y-y_0\|^2+A^2(x,y_t,\eta)\Big)^{\frac{p_k}{2}}}\;d\mu_x(\eta)dt.
\\ \nonumber &\leq& C \|y-y_0\|\int_0^1\tau_x(\psi)(y_t)dt
\end{eqnarray}
where $\psi$ is the function defined by
$$\psi(z)=\frac{1}{(\|y-y_0\|^2+\norm{z}^2)^{\frac{p_k}{2}}},\quad z\in\mathbb{R}^N.$$
 Using   Fubini's theorem, (\ref{com}) and   (\ref{tp}), we get\\\\
  $\displaystyle{ \int_{\min_{g\in G}\norm{g.x-y}>2 |y-y_0|}|\mathcal{K}_j^{(1)}(x,y)-\mathcal{K}_j^{(1)}(x,y_0)|dm_k(x)}$
\begin{eqnarray*}
  &\leq& C \|y-y_0\|\int_0^1 \int_{\mathbb{R}^N}\tau_{-y_t}(\psi)(x)dm_k(x)\;dt
\\&\leq& C |y-y_0|\int_{\mathbb{R}^N}\psi(z)dm_k(z)= C\int_{\mathbb{R}^N}\frac{du}{(1+u^2)^{\frac{p_k}{2}}}=C'.
\end{eqnarray*}
This established  the condition (\ref{ker}) for  $\mathcal{K}_j^{(1)}$.
\par To deal with $\mathcal{K}_j^{(\alpha)}$, $\alpha \in R_+$, we
put  for $x,y\in \mathbb{R}^N$ , $\eta\in co(G.x)$ and $t\in[0,1]$
\begin{eqnarray*}
U(x,y,\eta)&=&A^{ 2p_k-4 }(x,y,\eta), \\
 V_\alpha(x,y,\eta)&=&A^{p_k-2}A^{p_k-2}_\alpha(A^{p_k-2}+A^{p_k-2}_{\alpha}),\\
h_{\alpha,t}(y)&=& y+t(\sigma_\alpha.y-y)=y-t<y,\alpha>\alpha,\\
\end{eqnarray*}
 By mean value theorem we have
\begin{eqnarray*}
\mathcal{K}_j^{(\alpha)}(x,y)&=&\int_{\mathbb{R}^N}\frac{1}{<y,\alpha>}\frac{U(x,\sigma_\alpha.y,\eta) -
U(x, y,\eta)}{V_\alpha(x,y,\eta)}d\mu_x(\eta) \\&=&-\int_{\mathbb{R}^N}\int_0^1
\frac{ \partial_\alpha U(x,h_{\alpha,t}(y),\eta)}{V_\alpha(x,y,\eta)}\;dt\;d\mu_x(\eta)
\end{eqnarray*}
and
 \begin{equation}\label{KK1} \mathcal{K}_j^{(\alpha)}(x,y)-\mathcal{K}_j^{(\alpha)}(x,y_0)  =
\int_{\mathbb{R}^N}\int_0^1\int_0^1\partial_{y-y_0}
\Big(\frac{ \partial_\alpha U(x,h_{\alpha,t}(.),\eta)}{V_\alpha(x,.,\eta)}\Big)(y_\theta)\;d\theta \;dt\;d\mu_x(\eta).\end{equation}
Here the derivations are taken with respect to the variable $y$

To simplify, let us denote by $$A=A(x,y_\theta,\eta);\quad A_\alpha=A(x,\sigma_\alpha.y_\theta,\eta)$$
  Then using  (\ref{a1}) and the fact
$  \|\eta-h_{\alpha,t}(y_\theta)\|\leq \max(\;\|\eta-y_\theta\|,\|\eta-\sigma_\alpha(y_\theta)\|\;),$
  we obtain  \begin{eqnarray*}\label{1}
   \Big|\frac{\partial U}{\partial y_r} (x,h_{\alpha,t}(y_\theta),\eta) \Big|&\leq& C \Big(A^{2p_k-5}+
 A^{2p_k-5}_\alpha\Big)\\\label{2}
  \Big|\frac{\partial^2 U}{\partial y_r\partial y_s}(x,h_{\alpha,t}(y_\theta),\eta) \Big|
 &\leq& C  \Big(A^{2p_k-6}+
 A^{2p_k-6}_\alpha\Big),\quad r,s=1,...,N.
 \end{eqnarray*}
  This gives us the following estimates
 \begin{equation}\label{u1}
 \abs{\partial_{\alpha}U(x,h_{\alpha,t}(y_\theta),\eta)}\leq C \Big(A^{2p_k-5}+
 A^{2p_k-5}_\alpha\Big),
 \end{equation}
 \begin{equation}\label{u2}
\abs{\partial_{y-y_0}\Big(\partial_{\alpha}U(x,h_{\alpha,t}(.),\eta)\Big)(y_\theta)}\leq C \|y-y_0\| \Big(A^{2p_k-6}+
 A^{2p_k-6}_\alpha\Big).
 \end{equation}
   By (\ref{a1}) and (\ref{a2}), we also have
   $$\abs{\frac{\partial V_\alpha}{\partial y_r}((x,y_\theta,\eta))}
   \leq C  A^{p_k-3}A^{p_k-3}_\alpha(A^{p_k-2}+A^{p_k-2}_\alpha)(A+A_\alpha).
   $$
The elementary  inequality $\displaystyle{\frac{u+v}{u^\ell+v^\ell}\leq \frac{3}{u^{\ell-1}+v^{\ell-1}}},\quad u, v>0,\;\ell\geq1$, leads to   \begin{eqnarray}\nonumber
   \frac{\abs{\partial_{y-y_0} V_\alpha( x, y_\theta,\eta )}}{V^2_\alpha(x,y_\theta,\eta)} &\leq & C\|y-y_0\|\;\;
   \frac{A_\alpha+A}{A^{p_k-1}A^{p_k-1}_\alpha(A^{p_k-2}+A^{p_k-2}_\alpha)}
   \\\label{v}&\leq&\label{4} C\|y-y_0\| \;\;\frac{1}{A^{p_k-1}A^{p_k-1}_\alpha(A^{p_k-3}+A^{p_k-3}_\alpha)}.
     \end{eqnarray}
  Now  (\ref{u1}), (\ref{u2}) and (\ref{v}) yield\\ \\
  $\displaystyle{\Big|\partial_{y-y_0}
\Big(\frac{ \partial_\alpha U(x,h_{\alpha,t}(.),\eta)}{V_\alpha(x,.,\eta)}\Big)(y_\theta)\Big|}$
   \begin{eqnarray*}
 &\leq& C\|y-y_0\|\,\frac{A^{2p_k-6}+A^{2p_k-6}_\alpha}{A^{p_k-2}A^{p_k-2}_\alpha(A^{p_k-2}+A^{p_k-2}_{\alpha})}\\
&+& C\|y-y_0\|\,\frac{A^{2p_k-5}+A^{2p_k-5}_\alpha}{A^{p_k-1}A^{p_k-1}_\alpha(A^{p_k-3}+A^{p_k-3}_\alpha)}\\ &\leq& C\|y-y_0\|\,
\Big(\frac{1}{A^2A^{p_k-2}_\alpha}+\frac{1}{A^{p_k-2}A^{2}_\alpha}\Big)\\ &+& C\|y-y_0\|\Big(\frac{1}{AA^{p_k-1}_\alpha}+\,\frac{1}{A^{p_k-1}A_\alpha}\Big)\\ &\leq&
C\|y-y_0\|\Big(\frac{1}{A^{p_k}}+\frac{1}{A^{p_k}_\alpha}\Big)
\end{eqnarray*}
where in the last equality we have used the fact that
$\displaystyle{\frac{1}{uv^{\ell-1}}\leq \frac{1}{u^{\ell}}+\frac{1}{v^{\ell}}}$,\\$\quad u, v>0 $ and  $\ell\geq1$.

Thus, in view of (\ref{KK1}),\\

$\displaystyle{\Big|\mathcal{K}_j^{(\alpha)}(x,y)-\mathcal{K}_j^{(\alpha)}(x,y_0)\Big|}$
 $$
  \leq C\|y-y_0\|\int_0^1\int_{\mathbb{R}^N}\Big[\frac{1}{A^{p_k}(x,y_\theta,\eta )}+\frac{1}{A^{p_k}( x,\sigma_\alpha y_\theta,\eta )}\Big]\;d\mu_x(\eta)\;d\theta.
  $$
 Then by same argument as for $\mathcal{K}_j^{(1)}$ we obtain
 $$\int_{\min_{g\in G}|g.x-y |>2 |y-y_0|}|\mathcal{K}_j^{(2)}(x,y)-\mathcal{K}_j^{(2)}(x,y_0)|dm_k(x) \leq C , $$
which  established  the condition (\ref{ker})  for the kernel $\mathcal{K}_j^{(\alpha)}$ and furnishes the proof.
\end{proof}

As applications, we will prove a generalized Riesz and Sobolev
inequalities
\begin{corollary}[Generalized Riesz inequalities]
For all $1<p<\infty$ there exists a constant $C_p$ such that
\begin{eqnarray}\label{ri}
||T_rT_s(f)||_{k,p}\leq C_p||\Delta_k f||_{k,p},\quad \text{for all }\;f\in\mathcal{S}(\mathbb{R}^{N}),
\end{eqnarray}
where $\Delta_k$ is the Dunkl laplacian: $\displaystyle{\Delta_k f=\sum_{r=1}^N T_r^2(f)}$

\end{corollary}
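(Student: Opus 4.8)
The plan is to reduce the inequality to the $L^p$-boundedness of the Riesz transforms established in Theorem \ref{th2}, by identifying $T_rT_s$ with a composition of Riesz transforms and the Dunkl Laplacian at the level of Dunkl multipliers. The whole argument is an exercise in bookkeeping with the three relevant multiplier symbols, followed by a single application of Theorem \ref{th2}.

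First I would record the symbols on the Schwartz class. From (\ref{dtj}) one has $\mathcal{F}_k(T_jf)(\xi)=i\xi_j\,\mathcal{F}_k(f)(\xi)$, hence
$$\mathcal{F}_k(T_rT_sf)(\xi)=-\xi_r\xi_s\,\mathcal{F}_k(f)(\xi),\qquad \mathcal{F}_k(\Delta_kf)(\xi)=-\norm{\xi}^2\,\mathcal{F}_k(f)(\xi),$$
while (\ref{rm}) gives $\mathcal{F}_k(\mathcal{R}_r\mathcal{R}_sg)(\xi)=-\dfrac{\xi_r\xi_s}{\norm{\xi}^2}\,\mathcal{F}_k(g)(\xi)$. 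Taking $g=\Delta_kf$ and combining the last two displays yields $\mathcal{F}_k(\mathcal{R}_r\mathcal{R}_s\Delta_kf)(\xi)=\xi_r\xi_s\,\mathcal{F}_k(f)(\xi)$, which is exactly $-\mathcal{F}_k(T_rT_sf)(\xi)$. Since the Dunkl operators preserve $\mathcal{S}(\mathbb{R}^N)$, for $f\in\mathcal{S}(\mathbb{R}^N)$ all functions in sight lie in $L^2(m_k)$, and the Plancherel theorem (property (ii)) upgrades the equality of Dunkl transforms to the operator identity
$$T_rT_sf=-\,\mathcal{R}_r\mathcal{R}_s(\Delta_kf)$$
as elements of $L^2(m_k)$, hence almost everywhere.

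Finally I would invoke Theorem \ref{th2}: each $\mathcal{R}_j$ extends to a bounded operator on $L^p(m_k)$ for $1<p<\infty$, so $\mathcal{R}_r\mathcal{R}_s$ is bounded on $L^p(m_k)$ with norm at most $\norm{\mathcal{R}_r}_{p\to p}\,\norm{\mathcal{R}_s}_{p\to p}$. As $\Delta_kf\in\mathcal{S}(\mathbb{R}^N)\subset L^p(m_k)$, this gives
$$\norm{T_rT_sf}_{k,p}=\norm{\mathcal{R}_r\mathcal{R}_s(\Delta_kf)}_{k,p}\leq C_p\,\norm{\Delta_kf}_{k,p},$$
which is (\ref{ri}) with $C_p=\norm{\mathcal{R}_r}_{p\to p}\,\norm{\mathcal{R}_s}_{p\to p}$.

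The one point deserving care — and the only (mild) obstacle — is the consistency of the $L^p$-extension of $\mathcal{R}_r\mathcal{R}_s$ with its $L^2$ definition: the operator identity is derived on $L^2(m_k)$, whereas the quantitative bound is applied through the $L^p$-extension. This is resolved by observing that $\Delta_kf$ belongs to $L^2(m_k)\cap L^p(m_k)$, and that the $L^2$- and $L^p$-versions of $\mathcal{R}_r\mathcal{R}_s$ agree on this intersection by the density of $\mathcal{S}(\mathbb{R}^N)$; consequently the $L^p$-norm of $\mathcal{R}_r\mathcal{R}_s(\Delta_kf)$ coincides with that of $-T_rT_sf$, and the chain of inequalities above is legitimate.
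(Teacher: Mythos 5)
Your proposal is correct and follows essentially the same route as the paper: both derive the identity $T_rT_s(f)=\mathcal{R}_r\mathcal{R}_s(-\Delta_k f)$ from the multiplier relations (\ref{dtj}) and (\ref{rm}) and then conclude by the $L^p$-boundedness of the Riesz transforms (Theorem \ref{th2}). Your version merely fills in the details the paper leaves implicit (the symbol computation, the Plancherel argument, and the consistency of the $L^2$ and $L^p$ extensions), all of which are handled correctly.
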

\begin{proof}
From (\ref{dtj}) and (\ref{rm})   one can see that
 $$T_rT_s(f)=\mathcal{R}_r\mathcal{R}_s(-\Delta_k)(f),\; \quad r,\;s=1...N,\; f\in\mathcal{S}(\mathbb{R}^{N}).$$
Then (\ref{ri}) is concluded by Theorem 3.3.
\end{proof}
\begin{corollary}[Generalized Sobolev inequality]For all $1<p\leq q <2\gamma(k)+N$ with $\displaystyle{\frac{1}{q}=\frac{1}{p}-\frac{1}{2\gamma(k)+N}}$, we have
 \begin{eqnarray}\label{SI}
||f||_{q,k}\leq C_{p,q}||\nabla_k f||_{p,k}
 \end{eqnarray}
 for all $f\in\mathcal{S}(\mathbb{R}^{N})$.
 Here $\nabla_k f=(T_1f,...,T_Nf)$ and $\displaystyle{|\nabla_k f|=(\sum_{r=1}^N|T_rf|^2)^{\frac{1}{2}}}$.
 \end{corollary}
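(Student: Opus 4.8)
The plan is to invert the Dunkl Laplacian through the Riesz transforms together with a Riesz potential of order one, and then feed the outcome into Theorem \ref{th2}. Set $D=2\gamma_k+N=p_k-1$ for the homogeneous dimension of $(\mathbb{R}^N,m_k)$, and let $I_1=(-\Delta_k)^{-1/2}$ be the operator defined on the Dunkl transform side by $\mathcal{F}_k(I_1 g)(\xi)=\norm{\xi}^{-1}\mathcal{F}_k(g)(\xi)$. Applying (\ref{dtj}) twice gives $\mathcal{F}_k(-\Delta_k g)(\xi)=\norm{\xi}^2\mathcal{F}_k(g)(\xi)$; moreover, for $f\in\mathcal{S}(\mathbb{R}^N)$ the function $\mathcal{F}_k(T_r f)(\xi)=i\xi_r\mathcal{F}_k(f)(\xi)$ vanishes to first order at the origin, so $I_1(T_r f)$ is well defined. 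Combining (\ref{dtj}) and (\ref{rm}), a computation on the transform side yields
\begin{eqnarray*}
\sum_{r=1}^N\mathcal{F}_k\big(\mathcal{R}_r I_1 T_r f\big)(\xi)=\sum_{r=1}^N\Big(-i\frac{\xi_r}{\norm{\xi}}\Big)\frac{1}{\norm{\xi}}(i\xi_r)\mathcal{F}_k(f)(\xi)=\mathcal{F}_k(f)(\xi),
\end{eqnarray*}
whence $f=\sum_{r=1}^N\mathcal{R}_r I_1 T_r f$ for every $f\in\mathcal{S}(\mathbb{R}^N)$.

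Granting this representation, the inequality reduces to a mapping property of $I_1$. Since $1<q<\infty$, Theorem \ref{th2} applies at the exponent $q$, so
\begin{eqnarray*}
\norm{f}_{q,k}\leq\sum_{r=1}^N\norm{\mathcal{R}_r\big(I_1 T_r f\big)}_{q,k}\leq C\sum_{r=1}^N\norm{I_1 T_r f}_{q,k}.
\end{eqnarray*}
It therefore suffices to establish the Hardy--Littlewood--Sobolev estimate $\norm{I_1 g}_{q,k}\leq C\norm{g}_{p,k}$ under the relation $\frac1q=\frac1p-\frac1D$, since then
\begin{eqnarray*}
\norm{f}_{q,k}\leq C\sum_{r=1}^N\norm{T_r f}_{p,k}\leq C\,\norm{\nabla_k f}_{p,k},
\end{eqnarray*}
the last step using the pointwise bound $|T_r f|\leq|\nabla_k f|$.

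To treat $I_1$, I would realize it as a Dunkl convolution and estimate its kernel. Because the Dunkl transform sends a radial function homogeneous of degree $-1$ to a radial function homogeneous of degree $1-D$, the multiplier $\norm{\xi}^{-1}$ corresponds to convolution against a constant multiple of $\norm{\cdot}^{1-D}$, so that $I_1 g(x)=c\int_{\mathbb{R}^N}\tau_x(\norm{\cdot}^{1-D})(-y)\,g(y)\,dm_k(y)$. Evaluating the translation by R\"osler's formula (\ref{trad}) with $\widetilde{f}(r)=r^{1-D}$, and using (\ref{A}) together with $1-D<0$ and the fact that $\mu_x$ is a probability measure, gives the pointwise control
\begin{eqnarray*}
\tau_x\big(\norm{\cdot}^{1-D}\big)(-y)=\int_{\mathbb{R}^N}A(x,y,\eta)^{1-D}\,d\mu_x(\eta)\leq\Big(\min_{g\in G}\norm{g.x-y}\Big)^{1-D}.
\end{eqnarray*}

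The main obstacle is this last step: proving that the positive operator with kernel $(\min_{g\in G}\norm{g.x-y})^{1-D}$ is bounded from $L^p(m_k)$ into $L^q(m_k)$ for $1<p\leq q$ with $\frac1q=\frac1p-\frac1D$. This is a fractional-integration statement on the space of homogeneous type $(\mathbb{R}^N,m_k)$, whose natural quasi-distance is the $G$-invariant quantity $d(x,y)=\min_{g\in G}\norm{g.x-y}$. I would derive it from the doubling property (\ref{dm}) and the scaling $m_k(B(0,r))=c\,r^{D}$ of the measure, via the usual splitting of the kernel into a local part, estimated through the Hardy--Littlewood maximal function, and a tail part, estimated by H\"older's inequality; the delicate point is that $m_k$ is not Ahlfors $D$-regular near the reflecting hyperplanes, so the bounds must be carried out uniformly in the orbit distance. (Alternatively, this $L^p$--$L^q$ boundedness of the Dunkl--Riesz potential is already available in the literature \cite{Tx}.) Combining it with the two reductions above completes the proof.
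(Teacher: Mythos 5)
Your reduction is, up to the order in which the factors are composed, exactly the paper's: the paper writes $f=I_k^1\big(\sum_{j}\mathcal{R}_j(T_jf)\big)$ and applies Theorem \ref{th2} at the exponent $p$, while you write $f=\sum_{r}\mathcal{R}_r I_1 T_rf$ and apply Theorem \ref{th2} at the exponent $q$; since $\mathcal{R}_r$, $I_1$ and $T_r$ are all Dunkl multiplier operators on $\mathcal{S}(\mathbb{R}^N)$, this difference is immaterial. The real issue is the $L^p\to L^q$ bound for the order-one Riesz potential, and here the paper does not prove anything: it simply quotes Theorem 1.1 of \cite{HS}. So your parenthetical fallback (cite the literature for this Hardy--Littlewood--Sobolev inequality) is in substance the paper's proof, except that the reference on which the paper relies is \cite{HS} rather than \cite{Tx}; with that citation your argument is complete and essentially identical to the paper's.

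Your attempted self-contained proof of that step, however, contains a genuine flaw, and it is worse than the ``delicate point'' you flag. The pointwise bound $\tau_x\big(\norm{\cdot}^{1-D}\big)(-y)=\int A(x,y,\eta)^{1-D}d\mu_x(\eta)\leq d(x,y)^{1-D}$, with $d(x,y)=\min_{g\in G}\norm{g.x-y}$, is correct but fatally lossy: the positive operator with kernel $d(x,y)^{1-D}$ is \emph{not} bounded from $L^p(m_k)$ to $L^q(m_k)$ when $\gamma_k>0$. Indeed, away from the hyperplanes $\langle\alpha,x\rangle=0$ the measure $m_k$ is locally $N$-dimensional (comparable to a constant multiple of Lebesgue measure), not $D$-dimensional, while $1-D=1-N-2\gamma_k$. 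If $2\gamma_k\geq 1$ the kernel is not even locally integrable there, so the majorant operator is $+\infty$ on indicators of small balls; if $0<2\gamma_k<1$, testing on $f=\mathbf{1}_{B(x_0,\delta)}$ with $x_0$ fixed away from the hyperplanes gives $\norm{If}_{q,k}/\norm{f}_{p,k}\gtrsim\delta^{-2\gamma_k(D-1)/D}\to\infty$ as $\delta\to0$. The failure occurs at \emph{every} point at small scales, not only near the reflecting hyperplanes, so no amount of care with doubling can repair it. What the majorization throws away is precisely the essential information: $\mu_x$ assigns very small mass to the set of $\eta$ where $A(x,y,\eta)$ is comparable to its minimum $d(x,y)$, and it is this mass decay that makes the true kernel of $I_1$ the right size near the diagonal. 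Any honest proof must keep the measure $\mu_x$ (equivalently, the translation structure), which is what \cite{HS} does --- and note that the paper itself is careful on exactly this point in the proof of Theorem \ref{th2}: it keeps $\int A^{-p_k}(x,y_t,\eta)\,d\mu_x(\eta)$ intact, recognizes it as $\tau_x(\psi)(y_t)$, and invokes the contraction property (\ref{tp}), instead of bounding $A^{-p_k}$ by $d(x,y_t)^{-p_k}$. So: keep your reduction, drop the majorization, and replace your sketch by the citation \cite{HS}, Theorem 1.1.
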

\begin{proof}
 For all $f\in\mathcal{S}(\mathbb{R}^{N})$, we write
\begin{eqnarray*}
\mathcal{F}_k(f)(\xi)&=&\frac{1}{\|\xi\|}\sum_{r=1}^N\frac{-i\xi_r}{\|\xi\|}\Big(i\xi_r\mathcal{F}_k(f)(\xi)\Big)
\\&=&\frac{1}{\|\xi\|}\sum_{r=1}^d\frac{-i\xi_r}{\|\xi\|}\Big(\mathcal{F}_k(T_rf)(\xi)\Big).
\end{eqnarray*}
This yields to the following identity
 $$f=I_k^1\Big(\sum_{j=1}^N\mathcal{R}_j(T_jf)\Big),$$ where $$I_k^\beta(f)(x)=(d_{k}^{\beta})^{-1}
\int_{\mathbb{R}^{N}}\frac{\tau_{y}f(x)}
{\|y\|^{2\gamma(k)+N-\beta}}dm_k(y),
$$
here
$$d_{k}^{\beta}=2^{-\gamma(k)-N/2+\beta}
\frac{\Gamma(\frac{\beta}{2})}{\Gamma(\gamma(k)+\frac{N-\beta}{2})}.$$
 Theorem 1.1 of \cite{HS} asserts that $I_k^\beta$ a
bounded operator from $L^p(m_k)$ to $L^q(m_k)$. Then (\ref{SI}) follows from Theorem 3.3.
 \end{proof}


\begin{thebibliography}{}
\bibitem{AAS} B. Amri, Jean-Philippe Anker and M. Sifi
 \textit{Three results in Dunkl analysis},  Colloquium Mathematicum  118 (2010), 299-312
\bibitem{A} B. Amri, A. Gasmi and M. Sifi, \textit{Linear and bilinear multiplier operators for the Dunkl transform}, Mediterranean Journal of Mathematics (2010), vol. 7, no 4,  503-521.
\bibitem{DW} F. Dai and H. Wang \textit{A transference theorem for the Dunkl transform and its applications},
Journal of Functional Analysis
 258 (2010),no. 12,  4052-4074.
\bibitem{D1}
C. F. Dunkl,
\textit{Differential--Difference operators associated to reflection groups},
Trans. Amer. Math. 311 (1989), no. 1, 167-183.
\bibitem{J1}
M.F.E. de Jeu,
\textit{The Dunkl transform\/},
Invent. Math. 113 (1993), no. 1, 147-162.
\bibitem{HS}
H. Sallam, S. Mustapha and M. Sifi,  \textit{Riesz potentials and fractional maximal function for the Dunkl transform}, J. Lie Theory 19 (2009), no. 4, 725--734.
\bibitem{R1}
M. R\"osler,
\textit{Dunkl operators\,: theory and applications\/},
in \textit{Orthogonal polynomials and special functions
(Leuven, 2002)\/},$\mathbb{R}^N$
Lect. Notes Math. 1817, Springer--Verlag (2003), 93--135.
\bibitem{R2}
M. R\"osler,
\textit{A positive radial product formula for the Dunkl kernel\/},
Trans. Amer. Math. Soc. 355 (2003), no. 6, 2413--2438.
 \bibitem{St}
 E. M. Stein, \textit{Harmonic Analysis: Reals-Variable Methods,
 Orthogonality and Oscillatory Integrals}. Princeton, New Jersey
 1993.
 \bibitem{xu}
S. Thangavelyu and Y. Xu, \textit{Convolution operator and maximal function for Dunkl transform,
} J. Anal. Math. 97 (2005) 25-55.
\bibitem{Tx}
S. Thangavelyu and Y. Xu, \textit{Riesz transforms and Riesz potentials for the Dunkl transform,} J. Comp. and Appl. Math. 199 (2007),181-195.

\end{thebibliography}
\end{document}